\newtheorem{proposition}{Proposition}[section]
\theoremstyle{definition}
\newtheorem{remark}{Remark}[section]
\theoremstyle{plain}
\numberwithin{equation}{section}
\title[]{Critical points of the multiplier map for the quadratic family}
\author{Anna Belova}
\address{Uppsala University, Uppsala, Sweden}
\email{anna.belova.87@gmail.com}
\author{Igors Gorbovickis}
\address{Jacobs University, Bremen, Germany}
\email{i.gorbovickis@jacobs-university.de}
 \keywords{}
\thanks{}
\date{\today}
\begin{document}

\begin{comment}
{\it Comments.}
The following are some changes that have been made or need making.
\begin{enumerate}
\item[(1)]
\end{enumerate}

\newpage

\pagestyle{plain}
\setcounter{page}{1}

\end{comment}

\maketitle

\begin{abstract}
The multiplier $\lambda_n$ of a periodic orbit of period $n$ can be viewed as a (multiple-valued) algebraic function on the space of all complex quadratic polynomials $p_c(z)=z^2+c$. 
We provide a numerical algorithm for computing critical points of this function (i.e., points where the derivative of the multiplier with respect to the complex parameter $c$ vanishes). We use this algorithm to compute critical points of $\lambda_n$ up to period~$n=10$.
\end{abstract}

\section{Introduction}

It has been known since the works of Fatou and Julia that multipliers of periodic orbits can carry not only local, but also global information about the holomorphic dynamical system at hand.
In~\cite{Milnor_M2} J.~Milnor used the multipliers of the fixed points to parameterize the moduli space of degree $2$ rational maps. Using this parameterization he proved that this moduli space is isomorphic to $\mathbb C^2$. 
In the attempt to generalize this approach, it was observed by the second author~\cite{Gorbovickis} that the multipliers of any $m-1$ distinct periodic orbits provide a local parameterization of the moduli space of degree $m$ polynomials in a neighborhood of its generic point. It is then a natural question to describe the set of polynomials at which this local parameterization fails, 
that is, to describe the set of all critical points of the multiplier map, defined as the map which assigns to each degree $m$ polynomial the $(m-1)$-tuple of multipliers at the chosen periodic orbits.
The goal of the current paper is to collect numerical data for this problem in the most basic case $m=2$, i.e., the case of the quadratic family 
$$p_c(z)=z^2+c.$$
Even in this case the general problem seems to be quite complicated.

In the current paper we provide a numerical algorithm, that computes critical points of the multiplier map on the space of quadratic polynomials $p_c$. More specifically, given $n\in\mathbb N$, the algorithm finds the values of the parameter $c$, for which the map $p_c$ has a periodic orbit of period $n$, whose multiplier, viewed as a locally analytic function of $c$, has a vanishing derivative. Using this algorithm, we compute critical points of the multiplier map together with the corresponding periodic orbits, for periods up to $n=10$. In particular, we find a complete list of all critical points of the multiplier map, for periods up to $n=8$.

Last but not least, let us mention another important motivation for the current study -- the connection between the critical points of the multiplier map and the hyperbolic components of the famous Mandelbrot set. The argument of quasiconformal surgery implies that appropriate inverse branches of the multiplier map are Riemann mappings\footnote{A Riemann mapping of a simply connected domain is a conformal diffeomorphism of the unit disk onto that domain.} of the hyperbolic components~\cite{Milnor_hyper}. Possible existence of analytic extensions of these Riemann mappings to larger domains might allow to estimate the geometry of the hyperbolic components~\cite{Levin_2009}~\cite{Levin_2011}, which in turn, might shed light on one of the central questions in one-dimensional holomorphic dynamics, the question whether the Mandelbrot set is locally connected. Critical values of the multiplier map are the only obstructions for the above mentioned analytic extensions to exist.

\section{Notation and Terminology}

Let $p_c(z) = z^2 + c$ and denote its $n$-th iteration by $p^{\circ n}_c(z)$.

A point $z$ is {\it a periodic point} of $p_c$, if there exists a positive integer $n$, such that $p^{\circ n}_c(z)=z$. The minimal such $n$ is called \textit{the period} of $z$.

Given $n$, let {\it the period $n$ curve} $\textrm{Per}_n \subset \mathbb{C}\times \mathbb{C}$ be the closure of the locus of points $(c,z)$ such that $z$ is a periodic point of $p_c$ of period $n$ (see ~\cite{MR1755445} for more details).
Observe that each pair $(c,z)\in \mathrm{Per}_n$ determines a periodic orbit
\begin{equation*}
z=z_0\mapsto z_1 \mapsto \cdots \mapsto z_n=z_0.
\end{equation*}

Let $\mathbb{Z}_n$ denote the cyclic group of order $n$. This group acts on $\mathrm{Per}_n$ by cyclicly permuting points of the same periodic orbits for each fixed value of $c$. 
Then the factor space $\mathrm{Per}_n/\mathbb{Z}_n$ consists of pairs $(c, \mathcal{O})$ such that $\mathcal{O}$ is a periodic orbit of $p_c$.
Note that according to~\cite{MR1755445}, the space $\mathrm{Per}_n/\mathbb{Z}_n$ (as well as $\mathrm{Per}_n$) has a structure of a smooth algebraic curve.
(Note that there is a natural projection from $\mathrm{Per}_n$ to $\mathrm{Per}_n/\mathbb{Z}_n$.)

Let $\tilde{\lambda}_n: \mathrm{Per}_n \to \mathbb{C}$ be the map defined by
$$
\tilde\lambda_n\colon (c,z)\mapsto \frac{\partial p^{\circ n}_c}{\partial z}(z) = 2^n z_1 \cdots z_n.
$$
Observe that for all regular points of the projection $(c,z)\mapsto c$, the value $\tilde\lambda_n(c,z)$ is the multiplier of the periodic point $z$. Furthermore, if $z_1$ and $z_2$ belong to the same periodic orbit of $p_c$, then $\tilde\lambda_n(c,z_1)=\tilde\lambda_n(c,z_2)$, hence the map $\tilde\lambda_n$ projects to a well defined map $\lambda_n\colon \mathrm{Per}_n/\mathbb Z_n\to\mathbb C$ that assigns to each pair $(c,\mathcal O)$ the multiplier of the periodic orbit $\mathcal O$.

Both $\lambda_n$ and $\tilde\lambda_n$ are proper algebraic maps (c.f.~\cite{MR1755445}). 
The goal of this work is to study (compute) critical points of the multiplier map $\lambda_n$.

\section{Algorithm for computing critical points of the multiplier map}

\subsection{Computing derivatives}

Observe that all points $(c,z)\in\mathrm{Per}_n$ satisfy the following equation
\begin{equation}
\label{eq:fixed_point}
p^{\circ n}_c (z) = z.
\end{equation}
Together with the Implicit Function Theorem this implies that the parameter $c$ can serve as a local chart on $\mathrm {Per}_n$ at all points $(c,z)\in\mathrm{Per}_n$, such that $\tilde\lambda_n(c,z)\neq 1$. Hence, in a neighborhood of any such point, one can implicitly define a map $z(c)$, so that $(c,z(c))\in\mathrm{Per}_n$ for all nearby values of $c$. Then one can express the multiplier map $\tilde\lambda_n$ in the above local chart as
$$
\tilde\lambda_n(c)=\tilde\lambda_n(c,z(c)).
$$

According to Lemma~4.5 in~\cite{MR1755445}, if $\tilde\lambda_n(c,z)=1$, then $(c,z)$ cannot be a critical point of the multiplier map $\tilde\lambda_n$. Thus, in order to study all critical points of this map, it is sufficient to work in local charts associated with the parameter $c$ (i.e. the critical points of the multiplier map $\tilde\lambda_n$ correspond to those points $(c,z)\in\mathrm{Per}_n$, in a neighborhood of which the map $\tilde\lambda_n(c)=\tilde\lambda_n(c,z(c))$ is defined and $\tilde\lambda_n'(c)=0$).

We will use the following notation for the partial derivatives

\begin{equation*}
\frac{\partial p^{\circ n}_c}{\partial c} := \frac{\partial f_n}{\partial c},
\end{equation*}
where $f_n(c,z) = p^{\circ n}_c(z)$. 
By differentiating both left and right sides of ~\eqref{eq:fixed_point} we get
\begin{equation*}
\frac{dz}{dc}=\frac{\partial p^{\circ n}_c}{\partial c}(z)+\frac{\partial p^{\circ n}_c}{\partial z}(z)\frac{dz}{dc}.
\end{equation*}
Therefore
\begin{equation}
\label{eq:deriv_fix_pnt}
z'=\frac{dz}{dc} = \frac{\partial p^{\circ n}_c}{\partial c}(z)\left(1-\frac{\partial p^{\circ n}_c}{\partial z}(z)\right)^{-1} = \frac{\partial p^{\circ n}_c}{\partial c}(z)\left(1-\lambda_n(c)\right)^{-1}. 
\end{equation}
Observe that $\frac{\partial p^{\circ n}_c}{\partial c}(z)$ satisfies the recurrence relation 
\begin{equation*}
\frac{\partial p^{\circ n}_c}{\partial c}(z) 
=
\frac{\partial}{\partial c}p_c(p^{\circ n-1}_c(z))
=
1+2 \cdot p^{\circ n-1}_c(z)\frac{\partial p^{\circ n-1}_c}{\partial c}(z). 
\end{equation*}
We therefore get an expression for the derivative of the multiplier map
\begin{align}
\label{eq:deriv_mult}
\frac{d\lambda_n}{dc} 
&= \frac{d\lambda_n(c)}{dc} \\
&= 
2^n\left[ z' \cdot p_c(z) \cdot p^{\circ 2}_c(z)\cdots p^{\circ n-1}_c(z) 
\right. \notag\\
&\qquad
\left. + z \cdot \left(\frac{\partial p_c}{\partial c}(z) + z' \frac{\partial p_c}{\partial z}(z) \right)\cdot p^{\circ 2}_c(z)\cdots p^{\circ n-1}_c(z)
\right. \notag\\
&\qquad\qquad\qquad\qquad
\left. + \cdots + z\cdot p_c(z)\cdots p^{\circ n-2}_c(z)\cdot \left(\frac{\partial p^{\circ n-1}_c}{\partial c}(z) + z' \frac{\partial p^{\circ n-1}_c}{\partial z}(z)\right)
\right] \notag\\
&=
2^n\left[z'\prod_{i=1}^{n-1}p^{\circ i}_c(z) + z \sum_{i=1}^{n-1}\left(\frac{dp^{\circ i}_c}{dc}\prod_{\substack{j=1\\ j\neq i}}^{n-1}p^{\circ j}_c(z)\right)\right], \notag
\end{align}
where for $i=1,\dots, n-1$, we denote
\begin{equation*}
\frac{dp^{\circ i}_c}{dc} = \frac{\partial p^{\circ i}_c}{\partial c} + z' \frac{\partial p^{\circ i}_c}{\partial z}.
\end{equation*}
Finally, in order to find the critical points of the multiplier map $\tilde\lambda_n$, we combine~\eqref{eq:fixed_point},~\eqref{eq:deriv_fix_pnt} and~\eqref{eq:deriv_mult} into the following system of three algebraic equations
\begin{equation}
\label{eq:newton}
\left\{\begin{array}{lcl}
p^{\circ n}_c(z)-z
&=&
0\\
z' - \frac{\partial p^{\circ n}_c}{\partial c}(z)\left(1-\frac{\partial p^{\circ n}_c}{\partial z}(z)\right)^{-1} 
&=&
0\\
\frac{d\lambda_n}{dc}
&=&
0,
\end{array}\right.
\end{equation}
with three unknowns $c, z, z'$. Any critical point of the multiplier map $\tilde\lambda_n$ corresponds to a solution of the above system, thus, the problem of finding critical points of the map $\tilde\lambda_n$ can be reduced to the problem of solving the above system.

\subsection{The number of critical points of the multiplier map $\lambda_n$}

A question that naturally arises using the numerical methods, as for example Newton method, is how to make sure that all solutions are found. In this section we derive an upper bound for the number of critical points of the multiplier map which will be later used in the numerical algorithm. 

Let $\nu(n)$ be the number of periodic points of $p_c$ of period $n$ for a generic value of $c$. One can observe that the numbers $\nu(n)$ satisfy the recursive relation
$$
\nu(1)=2\qquad\text{and}\qquad \nu(n)=2^n-\sum_{\substack{m\text{ divides }n,\\ m\neq n}} \nu(m).
$$
In particular, this implies that $\nu(n)\sim 2^n$ as $n\to\infty$.

It was shown in ~\cite{MR1755445} that $c$ can be used as a local uniformizing parameter at any non-parabolic point of $\mathrm{Per}_n/\mathbb{Z}_n$ (i.e., where $\lambda_n(c,\mathcal O) \neq 1$). Moreover, the projection map $\pi_n:\mathrm{Per}_n/\mathbb{Z}_n \to \mathbb{C}$, defined as
$$
\pi_n\colon (c, \mathcal{O})\mapsto c,
$$
is proper of degree $\textrm{deg }\pi_n = \nu(n)/n$. Conversely, in a neighbourhood of a point $(c,\mathcal O)$ with $\lambda_n(c,\mathcal O)=1$, the multiplier $\lambda_n$ serves as a local uniformizing parameter for the curve $\mathrm{Per}_n/\mathbb{Z}_n$ and $\lambda_n: \mathrm{Per}_n/\mathbb{Z}_n \to \mathbb{C}$ is a proper map of degree $\textrm{deg } \lambda_n = \nu(n)/2$.

In order to derive the number of critical points of $\lambda_n$ recall {\it the Riemann-Hurwitz formula} (e.g.~\cite{MR2245223}). Let $X, Y$ be Riemann surfaces, where $X$ is connected with finite-dimensional homology, and $\chi(X), \chi(Y)$ denote the corresponding Euler characteristics. Suppose $f:Y\to X$ is a proper analytic map of degree $\textrm{deg } f$ with finitely many critical values. Then $f$ has finitely many critical points and 
\begin{equation}
\label{r-h}
\chi(Y) = \textrm{deg } f \cdot \chi(X) - \sum_{y\in Y}\left(\textrm{deg}_y f-1\right),
\end{equation}
where $\textrm{deg}_y f \ge 1$ is called {\it a ramification index} (or {\it a local degree}) of $f$ at $y$ and is defined in the following way. There exists an open neighbourhood $U\subset Y$ of $y$, such that $x=f(y)$ has only one preimage in $U$, i.e. $f^{-1}(x) \cap U =\{y\}$, and for all other points $\hat{x}\in f(U)$ the number of preimages is $\textrm{deg}_y f$. Observe that $\textrm{deg}_y f\neq 1$ only at the critical points of $f$ and hence the sum in ~\eqref{r-h} is finite.

Denote the number of (finite) critical points of $\pi_n$ and $\lambda_n$ by $N_{\pi_n}$ and $N_{\lambda_n}$ respectively. Let $Y$ be the Riemann surface obtained from $\mathrm{Per}_n/\mathbb{Z}_n$ by smooth compactification (i.e., compactification in $\mathbb {CP}^n$, possibly followed by resolution of singularities at infinity). Then $Y = \mathrm{Per}_n/\mathbb{Z}_n \cup Z$, where $Z$ is a finite set of points at infinity.


We continuously extend $\pi_n$ to the map $\pi_n: Y \to \mathbb{CP}^1$ of the whole surface $Y$ by setting $\pi_n(z) = \infty$, for all $z\in Z$.

In order to continuously extend the multiplier map in the similar way we need the following Proposition.
\begin{proposition}\label{Mult_at_infty_prop}
	The following relation holds:
	$$\lim_{\substack{(c,\mathcal O) \in \mathrm{Per}_n/\mathbb{Z}_n,\\ c \to \infty}} \lambda_n(c,\mathcal{O})=\infty.$$
\end{proposition}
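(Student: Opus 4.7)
The plan is to show that whenever $(c_k, \mathcal{O}_k) \in \mathrm{Per}_n/\mathbb{Z}_n$ is a sequence with $c_k \to \infty$, one has $\lambda_n(c_k, \mathcal{O}_k) \to \infty$. Writing $\mathcal{O}_k = \{z_1^{(k)}, \dots, z_n^{(k)}\}$ with $z_{i+1}^{(k)} = (z_i^{(k)})^2 + c_k$ and $z_{n+1}^{(k)} = z_1^{(k)}$, the formula $\lambda_n = 2^n \prod_{i=1}^n z_i$ reduces the task to establishing the two-sided asymptotic $|z_i^{(k)}| \asymp \sqrt{|c_k|}$ for every $i$, from which $|\lambda_n(c_k, \mathcal{O}_k)| \asymp 2^n |c_k|^{n/2} \to \infty$ follows at once.

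To obtain this asymptotic, I pass to a subsequence so that each ratio $|z_i^{(k)}|^2/|c_k|$ converges in $[0, \infty]$, and then rule out both endpoints. Suppose first that $|z_{i_0}^{(k)}|^2/|c_k| \to \infty$ for some index $i_0$. The reverse triangle inequality applied to $z_{i_0+1}^{(k)} = (z_{i_0}^{(k)})^2 + c_k$ gives $|z_{i_0+1}^{(k)}| \geq \tfrac{1}{2}|z_{i_0}^{(k)}|^2$ for $k$ large, and hence $|z_{i_0+1}^{(k)}|^2/|c_k| \gtrsim (|z_{i_0}^{(k)}|^2/|c_k|) \cdot |z_{i_0}^{(k)}|^2 \to \infty$, so the property of blowing up relative to $\sqrt{|c|}$ self-propagates along the orbit. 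Iterating $n$ times, I obtain $|z_{i_0}^{(k)}| = |z_{i_0+n}^{(k)}| \geq |z_{i_0}^{(k)}|^{2^n}/C_n$ for a constant $C_n > 0$, forcing $|z_{i_0}^{(k)}|$ to remain bounded, which contradicts the assumption together with $c_k \to \infty$. Suppose instead that $|z_{i_0}^{(k)}|^2/|c_k| \to 0$ for some $i_0$; then $|z_{i_0+1}^{(k)}| \geq |c_k| - |z_{i_0}^{(k)}|^2 \sim |c_k|$, whence $|z_{i_0+1}^{(k)}|^2/|c_k| \to \infty$, reducing to the case just excluded. Hence each limit lies in $(0, \infty)$ and the asymptotic follows.

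The delicate step is the bootstrap in the first case: the bound $|z_{i+1}| \geq |z_i|^2 - |c|$ is only informative when $|z_i|^2$ dominates $|c|$, and I must verify that this dominance persists — and in fact strengthens — under iteration. The identity $|z_{i+1}|^2/|c| \gtrsim (|z_i|^2/|c|) \cdot |z_i|^2$, whose two right-hand factors both tend to infinity, is precisely what makes the condition self-reinforcing, so that the $2^n$-fold exponential growth over one period of the orbit delivers the required contradiction.
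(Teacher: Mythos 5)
Your proof is correct. The core observation is the same as the paper's: a periodic point $z$ with $|z|^2$ much smaller than $|c|$ is immediately mapped near $c$, whose further images escape to infinity, while a periodic point with $|z|^2$ much larger than $|c|$ has images that blow up super-exponentially and cannot return in $n$ steps. In both arguments this pins the orbit into a region whose distance from the origin grows like $\sqrt{|c|}$. The packaging differs, though. The paper argues directly: it exhibits explicit discs $\mathbb{D}_R$ (radius $|c|/10$) and $\mathbb{D}_r$ (radius $\tfrac{1}{2}\sqrt{|c|}$), shows points outside $\mathbb{D}_R$ escape and points inside $\mathbb{D}_r$ are expelled past $\mathbb{D}_R$, and concludes all periodic points sit in the annulus $\mathbb{D}_R\setminus\mathbb{D}_r$ with $r\to\infty$; the lower bound $|z_i|\ge r$ alone gives $|\lambda_n|\ge 2^n r^n\to\infty$. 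You instead run a compactness/contradiction argument, extracting a subsequence along which every ratio $|z_i|^2/|c|$ converges in $[0,\infty]$ and ruling out both endpoints by propagating inequalities once around the cycle. This delivers the stronger two-sided asymptotic $|z_i|\asymp\sqrt{|c|}$, but the upper bound is never used: the lower bound alone produces the conclusion. So your route is a bit heavier than necessary, and the paper's explicit annulus is cleaner to state and verify, but both reductions are sound and rest on the same dynamical mechanism.
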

\begin{proof}
Assume that $c\neq 0$ and denote by $\mathbb{D}_R:=\mathbb{D}_R(0)$ the disc of radius $R = |c|/10$. Then for any $z\in\mathbb{C}\setminus \mathbb{D}_R$ we have
\begin{equation*}
|p_c(z)| = |z^2+c|> R|z|-|c|> (R-10)|z|.
\end{equation*} 
If $|c|$ is sufficiently large, then $R-10>2$, and the above inequality implies that the orbit of any point $z\in\mathbb{C}\setminus \mathbb{D}_R$ converges to $\infty$ under the dynamics of the map $p_c$. In particular, this means that all periodic points of the map $p_c$ lie in the disc $\mathbb{D}_R$.

We now consider the disc $\mathbb{D}_r:=\mathbb{D}_r(0)$ of radius $r=\frac{1}{2}\sqrt{|c|}$. Let $z\in D_r$, i.e. $0 \le |z| \le r$. Observe that
\begin{equation*}
|p_c(z)|=|z^2+c|\ge |c|-|c|/4 > R,
\end{equation*}
i.e. any point $z$ from the disc $\mathbb{D}_r$ is mapped outside of the disc $\mathbb{D}_R$ under one iteration of the map $p_c$ and tends to infinity under further iterations, provided that $|c|$ is sufficiently large. 
Hence all periodic points of the map $p_c$ lie inside the annulus  $\mathbb{D}_R\setminus \mathbb{D}_r$. 

Since $R\to \infty$ and $r\to \infty$ as $c\to \infty$, for all periodic points $z$ of $p_c$ it follows that $|z|\to \infty$ as $c\to \infty$. 

Recall that the multiplier $\lambda_n$ of the periodic point $z$ satisfies 
\begin{equation*}
\lambda_n = \frac{\partial p^{\circ n}_c}{\partial z}(z) = 2^n z_1 \cdots z_n,
\end{equation*}
where $z_1,\dots, z_n$ denotes the points of the orbit of $z$ under the map $p_c$. Combining the above observations, it follows that 
	$$\lim_{\substack{(c,\mathcal O) \in \mathrm{Per}_n/\mathbb{Z}_n,\\ c \to \infty}} \lambda_n(c,\mathcal{O})=\infty.$$
\end{proof}

According to Proposition~\ref{Mult_at_infty_prop}, we continuously extend $\lambda_n$ to the map $\lambda_n: Y \to \mathbb{CP}^1$ of the whole surface $Y$ by setting $\lambda_n(z)=\infty$ for all $z\in {Z}$.


For further reference, let us state the following propositions:
\begin{proposition}\label{deg_pn_prop}
	For any $y\in \mathrm{Per}_n/\mathbb{Z}_n$, we have $\textrm{deg}_y\pi_n \le 2$.
\end{proposition}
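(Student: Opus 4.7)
The plan is to localize the ramification computation at parabolic points and extract the bound from the dynatomic factorization together with the quadratic nature of $p_c$. If $\lambda_n(y) \neq 1$, then $c$ is a local uniformizer on $\mathrm{Per}_n/\mathbb{Z}_n$ at $y$, so $\deg_y\pi_n = 1$; hence I would focus on a parabolic point $y = [(c_0, z_0)]$ with $\lambda_n(y) = 1$. Let $m$ denote the exact period of $z_0$ under $p_{c_0}$ (so $m \mid n$), let $\mu$ be the period-$m$ multiplier at $z_0$, and set $q = n/m$; since $(c_0, z_0)$ arises as a limit of exact period-$n$ orbits, $\mu$ is a primitive $q$-th root of unity.

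The key step is to pass to the cover $\mathrm{Per}_n$. The stabilizer of $(c_0, z_0)$ under the $\mathbb{Z}_n$-action has exactly $q$ elements, so the cover $\mathrm{Per}_n \to \mathrm{Per}_n/\mathbb{Z}_n$ is ramified of index $q$ at $(c_0, z_0)$. Setting $k := \mathrm{ord}_{z=z_0}\Phi_n(c_0, z)$, where $\Phi_n$ is the dynatomic polynomial whose vanishing locus is $\mathrm{Per}_n$, an implicit-differentiation argument on $\mathrm{Per}_n$ — using that smoothness forces $\partial_c\Phi_n(c_0, z_0) \neq 0$ whenever $\partial_z\Phi_n(c_0, z_0) = 0$ — shows that $c - c_0$ vanishes to order exactly $k$ in the local uniformizer $z - z_0$ on $\mathrm{Per}_n$. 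Dividing by the ramification of the quotient yields $\deg_y\pi_n = k/q$, so the claim reduces to $k \le 2q$.

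To bound $k$ I would apply the dynatomic factorization $p_{c_0}^{\circ n}(z) - z = \prod_{d\mid n}\Phi_d(c_0, z)$. Among the factors, only $\Phi_m$ (with a simple zero, since $\mu \neq 1$ when $q > 1$) and $\Phi_n$ (of order $k$) vanish at $z_0$ when $m < n$; when $m = n$, only $\Phi_n$ vanishes at $z_0$. Hence $\mathrm{ord}_{z=z_0}(p_{c_0}^{\circ n}(z) - z)$ equals $k + 1$ in the satellite case and $k$ in the primitive case.

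The final ingredient — which I view as the main obstacle — is the classical dynamical fact that every parabolic cycle of a quadratic polynomial has parabolic multiplicity exactly $1$: by Fatou's flower theorem each attracting petal must capture a critical orbit, and $p_c$ has a unique free critical point. Consequently $p_{c_0}^{\circ n}(z) - z = a(z - z_0)^{q+1} + O((z-z_0)^{q+2})$ at the parabolic fixed point $z_0$ of $p_{c_0}^{\circ n}$, with $a \neq 0$. Combining, $k = q$ when $m < n$ and $k = q + 1 = 2$ when $m = n$, giving $\deg_y\pi_n \in \{1, 2\}$ in every case.
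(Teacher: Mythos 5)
Your proof is correct, and it takes a genuinely different route from the paper's. The paper's argument is a four-line reduction: if $\mathrm{deg}_y\pi_n>1$ then $y$ is parabolic by the Implicit Function Theorem, and then it directly cites (a refined form of) the Fatou--Shishikura inequality as stating that $\mathrm{deg}_y\pi_n\le 1+\#\{\text{critical points in the parabolic basin}\}$, which is at most $2$ since $p_c$ has a single critical point. You instead unpack this bound from first principles: you descend along $\mathrm{Per}_n\to\mathrm{Per}_n/\mathbb Z_n$ to convert $\mathrm{deg}_y\pi_n$ into the local vanishing order $k$ of $\Phi_n(c_0,\cdot)$ divided by the stabilizer order $q$, then compare $k$ against $\mathrm{ord}_{z_0}\bigl(p_{c_0}^{\circ n}(z)-z\bigr)$ via the dynatomic factorization, and finally pin down the latter using the classical Fatou flower theorem (each cycle of attracting petals must capture a critical orbit, and a quadratic has only one free critical orbit, forcing parabolic multiplicity $1$). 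This is more work but it replaces the black-box citation by the elementary petal argument, and as a by-product it shows more: $\mathrm{deg}_y\pi_n=2$ occurs precisely at primitive parabolic points ($m=n$), while satellite parabolic points ($m<n$) have $\mathrm{deg}_y\pi_n=1$ and so are not actually ramification points of $\pi_n$ at all. One small point worth making explicit when you claim $\partial_c\Phi_n(c_0,z_0)\ne 0$: you should note that in both the primitive and satellite cases the parabolicity forces $k\ge 2$ (since $\mathrm{ord}_{z_0}(p_{c_0}^{\circ n}(z)-z)=q\nu+1\ge 2$ and the $\Phi_m$ factor contributes at most a simple zero), so $\partial_z\Phi_n(c_0,z_0)=0$ and smoothness of $\mathrm{Per}_n$ then applies; otherwise the "whenever $\partial_z\Phi_n=0$" hypothesis would be dangling.
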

\begin{proof}
	If $y=(c,\mathcal O)$ and $\textrm{deg}_y\pi_n > 1$, then by the Implicit Function Theorem we have $\lambda_n(y)=1$, which means that $\mathcal O$ is a parabolic periodic orbit. According to the Fatou-Shishikura inequality (c.f.~\cite{MR2193309}),  the ramification index $\textrm{deg}_y\pi_n$ is not greater than $1$ plus the number of critical points of $p_c$, lying in the basin of attraction of the orbit $\mathcal O$. Since the polynomial $p_c$ has only one critical point, the statement of the proposition follows.
\end{proof}

\begin{proposition}\label{lambda_eq_1_prop}
	If $\lambda_n(c,\mathcal O)=1$, for some point $(c,\mathcal O)\in \mathrm{Per}_n/\mathbb{Z}_n$, then either $(c,\mathcal O)$ is a critical point of $\pi_n$, or $\mathcal O$ is a periodic orbit of period $p<n$, $n=pr$, for some integer $r>1$, and $\lambda_p(c,\mathcal O)$ is a primitive root of unity of degree $r$.
\end{proposition}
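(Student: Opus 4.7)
My plan is to split the argument into two cases depending on the exact period of $\mathcal{O}$ under $p_c$. Let $p$ denote that exact period; since $p_{c}^{\circ n}(z)=z$ for every $z\in\mathcal O$, we must have $p\mid n$, say $n=pr$ with $r\ge 1$.

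In the first case, $r=1$ and $\mathcal O$ has exact period $n$. For any $z_0\in\mathcal O$ the defining equation $F(c,z):=p_c^{\circ n}(z)-z=0$ of $\mathrm{Per}_n$ satisfies $\partial_z F(c,z_0)=\lambda_n(c,\mathcal O)-1=0$, so the implicit function theorem fails to produce $z$ as an analytic function of $c$ near $(c,z_0)$. Since the excerpt already asserts that $\mathrm{Per}_n/\mathbb Z_n$ is a smooth algebraic curve, this forces its tangent direction at $(c,\mathcal O)$ to be vertical, and therefore $\pi_n$ has vanishing differential there. In other words, $(c,\mathcal O)$ is a critical point of $\pi_n$.

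In the second case, $r>1$ and I set $\mu:=\lambda_p(c,\mathcal O)$; the chain rule gives $\lambda_n(c,\mathcal O)=\mu^r=1$, so $\mu$ is an $r$-th root of unity. Let $s$ denote its exact multiplicative order, so $s\mid r$, and the goal is to rule out $s<r$. Assume $s<r$ for contradiction. Work in a local coordinate $w$ centered at some $z_0\in\mathcal O$ and consider the family $g_{\tilde c}:=p_{\tilde c}^{\circ p}$. Since $g_c(w)=\mu w+O(w^2)$ and $g_c^{\circ s}$ is a nonidentity polynomial whose linear part at $w=0$ equals $\mu^s=1$, there is a parabolic expansion
\[
g_c^{\circ s}(w)-w=a\,w^{l+1}+O(w^{l+2}),\qquad a\ne 0,\ l\ge 1.
\]
A standard iteration of tangent-to-identity germs then yields $g_c^{\circ r}(w)-w=(r/s)\,a\,w^{l+1}+O(w^{l+2})$, so both of these polynomials in $w$ have a zero of order exactly $l+1$ at $w=0$.

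The finishing move is a continuity-of-roots argument: for $\tilde c$ sufficiently close to $c$, Hurwitz's theorem gives that $g_{\tilde c}^{\circ s}(w)-w$ and $g_{\tilde c}^{\circ r}(w)-w$ each possess exactly $l+1$ zeros (counted with multiplicity) in a fixed small disk about $w=0$. The tautological inclusion $\{g_{\tilde c}^{\circ s}(w)=w\}\subset\{g_{\tilde c}^{\circ r}(w)=w\}$, combined with the fact that the local multiplicity of any fixed point is preserved on passing from $g^{\circ s}$ to its iterate $g^{\circ r}$, forces equality of these local zero sets. Hence every $p_{\tilde c}$-periodic point near $z_0$ has period dividing $ps<n$, so no exact $n$-periodic orbit of $p_{\tilde c}$ accumulates at $\mathcal O$ as $\tilde c\to c$. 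This contradicts $(c,\mathcal O)\in\mathrm{Per}_n/\mathbb Z_n$, since the latter is by definition the closure of the locus of exact $n$-periodic orbits; therefore $s=r$. The main obstacle I anticipate is precisely this bookkeeping step — ensuring that in the family $g_{\tilde c}$ the local root counts really match and that none of the $l+1$ roots escape the fixed disk on the way from $\tilde c=c$ to nearby parameters.
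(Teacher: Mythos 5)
Your argument is correct in outline and takes a genuinely different route from the paper's. The paper dispatches this in one line by invoking the Fatou--Shishikura inequality: since $p_c$ has a single critical point, a parabolic cycle captures at most one critical orbit, and combined with Milnor's local description of $\mathrm{Per}_n$ near parabolic parameters this forbids $\lambda_p$ from being a non-primitive $r$-th root of unity and identifies the remaining case as a critical point of $\pi_n$. You replace that dynamical input with a purely local analytic count: tangent-to-identity expansions plus a Hurwitz/Rouch\'e comparison of fixed points of $p_{\tilde c}^{\circ ps}$ versus $p_{\tilde c}^{\circ pr}$ in a small disk, showing that if the order $s$ of $\lambda_p$ were strictly less than $r$ then no exact period-$n$ orbit could accumulate on $\mathcal O$, contradicting $(c,\mathcal O)\in\mathrm{Per}_n$. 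This is more elementary and self-contained (it uses only Milnor's smoothness of $\mathrm{Per}_n/\mathbb Z_n$, no Fatou--Shishikura), at the cost of a longer case analysis.

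Two steps are stated a bit too quickly. For the case $r=1$: the inference ``$\mathrm{Per}_n/\mathbb Z_n$ smooth $\Rightarrow$ vertical tangent'' is not immediate from $\partial_z F=0$ alone. One should note that near a point of exact period $n$ the curve $\mathrm{Per}_n$ coincides with the set $\{F=0\}$, that $F$ is square-free (generic $c$ has simple $n$-periodic points), so smoothness of the curve forces $\nabla F(c,z_0)\neq 0$, and then $\partial_z F=\lambda_n-1=0$ pins down the tangent line as $\{0\}\times\mathbb C$. (Equivalently: for generic nearby $\tilde c$ the polynomial $F(\tilde c,\cdot)$ has at least two distinct roots near $z_0$, all lying on $\mathrm{Per}_n$, which is incompatible with $\pi_n$ being a local chart.) For the case $r>1$: the asserted ``fact'' that the local multiplicity of a fixed point is \emph{preserved} under $h\mapsto h^{\circ k}$ is false in general --- a simple fixed point of $h$ whose multiplier is a nontrivial $k$-th root of unity gains multiplicity under iteration. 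What is true without extra input is that the multiplicity cannot \emph{decrease}; combined with the fact that both Hurwitz counts equal $l+1$, this already forces the two local zero divisors to coincide. (Alternatively, for $\tilde c$ sufficiently close to $c$ every simple fixed point of $g_{\tilde c}^{\circ s}$ near $z_0$ has multiplier close to $1$, hence not a nontrivial root of unity of order dividing $r/s$, so multiplicities are in fact preserved in the relevant range.) Both gaps are easily closed, and the overall strategy is sound.
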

\begin{proof}
	The proposition follows from the Fatou-Shishikura inequality in a similar way as Proposition~\ref{deg_pn_prop}.
\end{proof}

We recall that $Z\subset Y$ is the inverse image of infinity under the map $\pi_n$. Applying the Riemann-Hurwitz formula~\eqref{r-h} to the projection $\pi_n$ and using Proposition~\ref{deg_pn_prop}, we get
\begin{align}
\label{r-h_pi}
\chi(Y) &= \textrm{deg }\pi_n \cdot \chi(\mathbb{CP}^1)-N_{\pi_n}-\sum_{y\in Z}\left(\mathrm{deg}_y \pi_n - 1\right).
\end{align}
Therefore, assuming that $Z$ consists of $\kappa$ points, we have
\begin{equation}
\sum_{y\in Z}\left(\textrm{deg}_y \pi_n - 1\right) = \textrm{deg }\pi_n-\kappa,
\end{equation}
and 
\begin{equation}
\label{r-h_pi_2}
\chi(Y) = \textrm{deg }\pi_n-N_{\pi_n}+\kappa.
\end{equation}

Observe that if $(c,\mathcal O)\in\mathrm{Per}_n/\mathbb Z_n$ is a critical point of the projection $\pi_n$, then $\lambda_n(c,\mathcal O)=1$. 
Since all points in the closure of the unit disc $\mathbb{D}$ are regular values of $\lambda_k$ for all $k\in \mathbb{N}$, (c.f.~\cite{Epstein_1}), Proposition~\ref{lambda_eq_1_prop} implies the following formula for the number of critical points of $\pi_n$:

\begin{equation}
\label{N_pi}
N_{\pi_n} = \textrm{deg }\lambda_n-\sum_{\substack{\forall r,p \textrm{ s.t.} \\ n=rp\\p<n}} \textrm{deg }\lambda_p\cdot \varphi(r),
\end{equation}
where $\varphi(r)$ is the Euler's function that counts the positive integers up to $r$ that are relatively prime with $r$.

Analogous computations for $\lambda_n$ using the Riemann-Hurwitz formula~\eqref{r-h} show that
\begin{align}
\label{r-h_lambda}
\chi(Y) &\le \textrm{deg }\lambda_n \cdot \chi(\mathbb{CP}^1)-N_{\lambda_n}-\sum_{y\in Z}\left(\textrm{deg}_y \lambda_n - 1\right)\\
\qquad &=
\textrm{deg }\lambda_n - N_{\lambda_n} + \kappa.
\end{align}
Note that in contrast to the case of the critical points of the projection map $\pi_n$, we are not guaranteed that all critical points of the map $\lambda_n$ are of multiplicity~$1$ (in fact, we do not know whether this is true or false). Because of this we get an inequality instead of an equality in~(\ref{r-h_lambda}).

Now, combining ~\eqref{r-h_pi_2}, ~\eqref{N_pi}, ~\eqref{r-h_lambda}, we derive an upper bound for the number of critical points of the multiplier map
\begin{align*}
N_{\lambda_n} &\le 
\textrm{deg }\lambda_n - \chi(Y) + \kappa \\
\quad &=
\textrm{deg }\lambda_n - \textrm{deg }\pi_n+N_{\pi_n} - \kappa + \kappa \\
\quad &=
2 \cdot \textrm{deg }\lambda_n - \textrm{deg }\pi_n -\sum_{\substack{\forall r,p \textrm{ s.t.} \\ n=rp\\p<n}} \textrm{deg }\lambda_p\cdot \varphi(r).
\end{align*}
Finally, expressing $\textrm{deg }\pi_n$ and $\textrm{deg }\lambda_n$ as $\nu(n)/n$ and $\nu(n)/2$ respectively, we get
\begin{equation}
\label{N_lambda}
N_{\lambda_n} \le \nu(n)-\frac{\nu(n)}{n} -\frac{1}{2}\sum_{\substack{\forall r,p \textrm{ s.t.} \\ n=rp\\p<n}} \nu(p) \cdot \varphi(r).
\end{equation}
\begin{remark}
	We note that the above inequality turns into an equality if the critical points of the multiplier map $\lambda_n$ are counted with their multiplicities.
\end{remark}

\subsection{Algorithm}

To solve the system of equations ~\eqref{eq:newton} we use the Newton method. In addition to the critical points $c$ of the multiplier map $\lambda_n$, this allows us to determine the corresponding periodic points $z$ and its derivatives $z'$.
 
We fix the period $n$. All initial guesses for the Newton method will be randomly chosen on the complex curve defined by the first two equation of~\eqref{eq:newton}. Since every solution to the system~\eqref{eq:newton} lies on this curve, we hope that such initial conditions are more likely to belong to the domains of attraction of the solutions. 

More specifically, initial guesses for the Newton method are chosen as follows: 
we first generate a random guess for the parameter $c$ and compute all periodic points $z$ of period $n$ for the polynomial $p_c$. In order to do this, we apply the algorithm of Hubbard, Schleicher and Sutherland, developed in~\cite{MR1859017}.
For every choice of a periodic point $z$, obtained this way, we compute the corresponding initial value for $z'$ using~\eqref{eq:deriv_fix_pnt}. The resulting triples $(c,z,z')$ are used as initial guesses for the Newton method.

\begin{remark}
Note that since each pair $(c,z)\in \textrm{Per}_n$ determines a pair $(c,\mathcal{O})\in \textrm{Per}_n/\mathbb{Z}_n$, it is enough to consider only one point $z$ (with corresponding $z'$) from each periodic orbit. 
\end{remark}

\begin{remark}
Since system~\eqref{eq:newton} commutes with complex conjugation, each solution $(c,z,z')$ comes together with its complex conjugate $(\overline c,\overline z,\overline z')$. Thus once a solution $(c,z,z')$ is found, we can speed up the computation by also including $(\overline c,\overline z,\overline z')$ to the list of solutions. 
\end{remark}

We can summarize the entire process by the following algorithm. 
\begin{itemize}
\item Input: the period $n$.
	\begin{itemize}
	\item[0:] Set the counter of the critical points of the multiplier map $k=0$, compute the upper bound on $N_{\lambda_n}$ using ~\eqref{N_lambda}.
	\item[1:] Generate randomly $c$, find all $z$ using the method described in ~\cite{MR1859017}, select one $z$ from each orbit, compute $z'$. Store triplets of the initial guesses $c$, $z$, $z'$ in the set $\Sigma_0$.
	\item[2:] If $\Sigma_0\neq\varnothing$, then take an initial guess from the set $\Sigma_0$, remove it from $\Sigma_0$ and proceed to Step 3. If the set $\Sigma_0$ was empty, return to Step 1. 
	\item[3:] Iterate the 3-dimensional Newton operator applied to the system ~\eqref{eq:newton} at the initial guess for maximum 50 times. If the Newton method does not converge after 50 iterations with the desired tolerance, return to Step 2.
	\item[4:] Test if the point $z$ obtained by the Newton method, is of period $n$. If this condition is fulfilled, then store the obtained triplet $(c,z,z')$ and its complex conjugate $(\overline{c},\overline{z},\overline{z}')$ in the set of solutions of ~\eqref{eq:newton} $\Sigma_n$, and set k=k+2.
	\item[5:] If $k < N_{\lambda_n}$, return to Step 2.     
	\end{itemize}
\item Output: the set $\Sigma_n$.
\end{itemize}

\section{Results of the numerical experiments}

The algorithm described above has been implemented in a C++ program. In this section we present the outcome of the numerical experiments. The complete list of critical points of $\lambda_n$ found by the program, can be downloaded from \url{https://www.dropbox.com/sh/nr5847qnhapd8zc/AACdqv2rOxghrBLGQo47zbcma?dl=0}. The tolerance for the Newton's method has been set up at $10^{-10}$.

Note that the multiplier map $\lambda_n$ does not have critical points for periods $n=1,2$. We ran the algorithm for several periods $n=3,\dots,10$. Table~\ref{table1} displays the upper bound for the number of critical points $N_{\lambda_n}$ of the multiplier map for each period and the number of critical points computed by the implemented algorithm, i.e. $\#\Sigma_n$. It is likely that not all critical points have been detected for periods $n=9$ and $n=10$, which can be seen in Table~\ref{table1}. One of the reasons for the missing points might be that the critical points are lying too close to each other and cannot be distinguished using the standard double precision in the computations. However increasing the precision can significantly change the running time. It could also be that the basins of attraction of some of the points for the Newton's method are very small and are easily missed by the initial guesses. The computations for each period up to $n=7$ took less than 30 sec in double precision while for period $n=8$ it took almost 2 hours. Due to the randomness of the initial guesses the running time might be slightly different for the same period for different shots though our experiments showed that this difference is minor.

\begin{table}[h]
\centering
\begin{tabular}{c|l|l|l|l|l|l|l|l}
$n$ & 3 & 4 & 5 & 6 & 7 & 8 & 9 & 10 \\
\hline
Upper bound for $N_{\lambda_n}$ & 2 & 6 & 20 & 38 & 102 & 198 & 436 & 868 \\
\hline
$\#\Sigma_n$ & 2 & 6 & 20 & 38 & 102 & 198 & 434 & 602\\
\hline
Inside the Mandelbrot set (\%) & 0 & 0 & 20 & 10 & 15 & 14 & 14 & 9\\
\hline
Outside the Mandelbrot set (\%) & 100 & 100 & 80 & 90 & 85 & 86 & 86 & 91 
\end{tabular}
\vspace*{4mm}
\caption{Number of critical points of the multiplier map.}\label{table1}
\end{table}

\begin{figure}[h]
	\centering
	\begin{subfigure}[b]{0.45\textwidth}
		\includegraphics[width=0.9\textwidth]{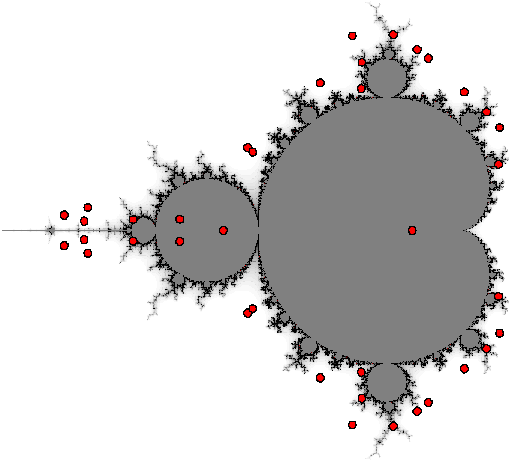}
		\caption{$n=6$}
	\end{subfigure}
	~ 
	\begin{subfigure}[b]{0.45\textwidth}
		\includegraphics[width=0.9\textwidth]{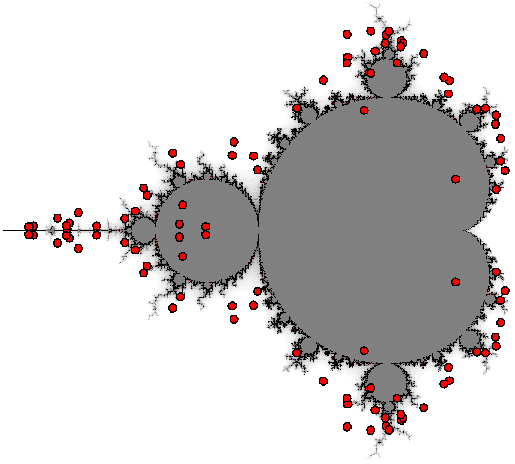}
		\caption{$n=7$}
	\end{subfigure}
	
	~ 
	\begin{subfigure}[b]{0.45\textwidth}
		\includegraphics[width=0.9\textwidth]{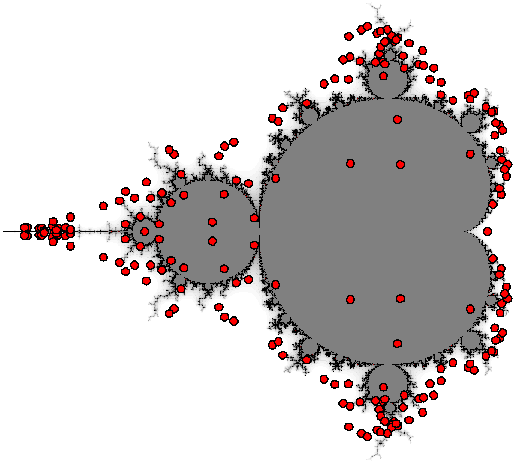}
		\caption{$n=8$}
	\end{subfigure}
	\begin{subfigure}[b]{0.45\textwidth}
		\includegraphics[width=0.9\textwidth]{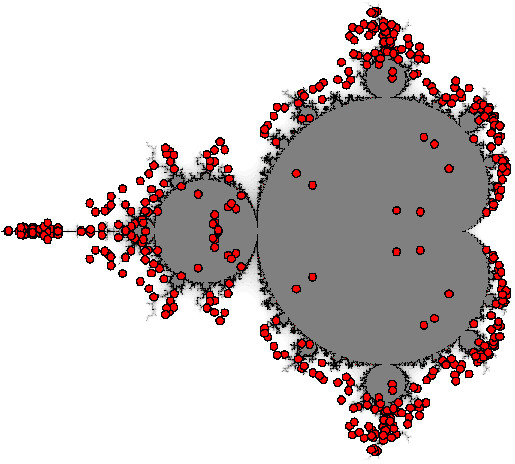}
		\caption{$n=9$}
	\end{subfigure}
	\caption{Critical points of the multiplier map $\lambda_n(c,z)$ on the parameter space.}\label{fig:crit_pnts_mandelbrot}
\end{figure}


\section{Discussion of the results}

In this section we give a basic discussion of the results of our computations and state some questions and conjectures.

Figure~\ref{fig:crit_pnts_mandelbrot} represents critical points of the multiplier map $\lambda_n(c,z)$ on the parameter space of quadratic polynomials for some periods $n$. 
The main question is: do the critical points of the multiplier map have any dynamical meaning? We can see from the pictures that they may correspond to quadratic polynomials with both connected and disconnected Julia sets. The pictures also suggest that as $n$ increases, most of the critical points of the multiplier map tend to accumulate on the boundary of the Mandelbrot set. This leads to the following question:
let $X_n\subset\mathbb C$ be the projection of the set of all critical points of $\lambda_n$ onto the coordinate $c$ and let $\nu_n$ be the probability measure
$$
\nu_n= \frac{1}{\mathrm{card} (X_n)}\sum_{c\in X_n}\delta_c.
$$
Is it true that as $n\to +\infty$, the sequence of measures $\nu_n$ converges to a measure $\mu$ supported on the boundary of the Mandelbrot set? If yes, is $\mu$ the bifurcation measure? Positive answers to such questions have been obtained for various other classes of dynamically significant points for example, in~\cite{Levin} and~\cite{Buff_Gauthier}.

Next, we can observe that while most of the elements of the sets $X_n$ are strictly complex, for periods $n=6, 8$ the sets $X_n$ also contain purely real elements. Can we understand this phenomenon? Furthermore, for $n=6$ one of these purely real critical points lies exactly at $c=0$. The latter suggests the following:
given a periodic point $z_0\neq 0$ of the polynomial $p_0(z)=z^2$, one can compute the derivative of the multiplier $\frac{d\tilde{\lambda}_n}{dc}(0, z_0)$ using the formula
$$
\frac{d\tilde{\lambda}_n}{dc}(0, z_0) = -2^n\sum_{j=0}^{n-1} z_0^{-2^{j+1}},
$$
which was obtained in~\cite{Gorbovickis}. Using this formula, we can check numerically whether $c=0$ is a critical point of the multiplier map $\lambda_n$ for periods $n>8$. Due to the limited precision, we performed computations up to period $n=30$ and obtained that the multiplier map $\lambda_n$ has a critical point at $c=0$, for periods $n=6, 12, 18, 20, 21, 24$ and~$30$. Furthermore, for each of these periods, except $n=6$, the value $d\lambda_n/dc=0$ is obtained at more than one different periodic orbit.

\begin{table}[h]
	\centering
	\begin{tabular}{l|l}
		$n$ & $z_0$ \\
		\hline
		6  & $\exp(2\pi i/9)$ \\    
		12 & $\exp(2\pi i/45)$ \\   
		18 & $\exp(2\pi i/27)$ \\   
		20 & $\exp(2\pi i/25)$ \\   
		21 & $\exp(2\pi i/49)$ \\   
		24 & $\exp(2\pi i/153)$ \\  
		30 & $\exp(2\pi i/99)$ \\
	\end{tabular}
	\vspace*{4mm}
	\caption{Examples of periodic points corresponding to the multiplier map $\lambda_n$ with a critical point at $c=0$.}\label{table3}
\end{table}

\begin{table}[h]
	\centering
	\begin{tabular}{c|l|l|l|l|l|l|l|l}
		$n$ & 3 & 4 & 5 & 6 & 7 & 8 & 9 & 10 \\
		\hline
		$\min |\lambda_n|$ & 7.384 & 5.841 & 4.942 & 4.416 & 4.087 & 3.869 & 3.718 & 3.610 \\
	\end{tabular}
	\vspace*{4mm}
	\caption{The smallest modulus of the critical values of $\lambda_n$ with respect to $n$.}\label{table2}
\end{table}

\begin{figure}[h]
	\centering
	\includegraphics[width=\textwidth]{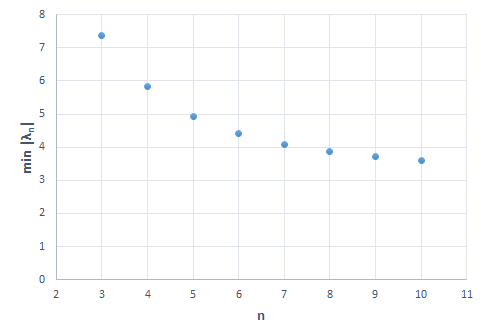}
	\caption{The smallest modulus of the critical values of $\lambda_n$ with respect to $n$.}\label{fig:graph}
\end{figure}

Another important problem is to study the critical values of the multiplier maps $\lambda_n$. As it was mentioned in the introduction, the inverse branches of $\lambda_n$ projected onto the $c$-coordinate are Riemann mappings of the corresponding hyperbolic components of the Mandelbrot set. In particular, this implies that all critical values of the multiplier maps $\lambda_n$ lie outside of the open unit disk. The question is: how close can they get to the unit disk? 
Are the critical values of $\lambda_n$ bounded away from the unit disk uniformly in $n$? If the answer to this question is positive, then one might use the Koebe Distortion Theorem to get uniform bounds on the geometric shape of the hyperbolic components. 
The results of our computations, summarized in Table~\ref{table2} and Figure~\ref{fig:graph}, cannot obviously give a definite answer to the stated question. Nevertheless, Figure~\ref{fig:graph} suggests that the answer might be positive.

\section*{Acknowledgments}

The authors would like to thank the Department of Mathematics at Uppsala University, where the main part of this work has been done. The authors would also like to thank Tanya Firsova for some valuable remarks and suggestions.


\bibliographystyle{plain}
\bibliography{bibl_multipliers}

\end{document}